\newtheorem{theorem}{Theorem}[section]
\newtheorem{corollary}[theorem]{Corollary}
\newtheorem{lemma}[theorem]{Lemma}
\newtheorem*{claim*}{Claim}
\theoremstyle{definition}
\newtheorem*{remark*}{Remark}
\newtheorem{definition}[theorem]{Definition}
\newcommand\maxcut{\texttt{MaxCut} }
\newcommand\Eb{\mathbb{E}}  
\newcommand\Pb{\mathbb{P}}  
\newcommand\Zb{\mathbb{Z}}
\newcommand\Nb{\mathbb{N}}
\newcommand\Gb{\mathbb{G}}
\newcommand\Gconf{\mathbb{G}^{\mathrm{conf}}}
\newcommand\Fc{\mathcal{F}}
\newcommand\Hc{\mathcal{H}}
\newcommand\Pc{\mathcal{P}}  
\newcommand\qb{\bm{q}}
\newcommand\ab{\bm{a}}
\newcommand\al{\alpha}
\newcommand\be{\beta}
\newcommand\ga{\gamma}
\newcommand\ka{\kappa}
\newcommand\si{\sigma}
\newcommand\Fst{F^{\mathrm{star}}}
\newcommand\Fed{F^{\mathrm{edge}}}
\newcommand\Fbst{\mathfrak{F}_\beta^{\mathrm{star}}}
\newcommand\Fbed{\mathfrak{F}_\beta^{\mathrm{edge}}}
\newcommand\phist{\varphi^{\mathrm{star}}}
\newcommand\phied{\varphi^{\mathrm{edge}}}
\newcommand\Phist{\Phi^{\ast}}
\DeclareMathOperator{\sign}{sign}
\newcommand{\defeq}{\mathrel{\vcenter{\baselineskip0.5ex \lineskiplimit0pt
                     \hbox{\scriptsize.}\hbox{\scriptsize.}}}%
                     =}
\DeclareFontFamily{U}{matha}{\hyphenchar\font45}
\DeclareFontShape{U}{matha}{m}{n}{
  <-6> matha5 <6-7> matha6 <7-8> matha7
  <8-9> matha8 <9-10> matha9
  <10-12> matha10 <12-> matha12
  }{}
\DeclareSymbolFont{matha}{U}{matha}{m}{n}
\DeclareMathSymbol{\Lt}{3}{matha}{"CE}
\title{RSB bounds on the maximum cut}
\author{Viktor Harangi}
\address{HUN-REN Alfr\'ed R\'enyi Institute of Mathematics, Budapest, Hungary}
\email{harangi@renyi.hu}
\thanks{The author was supported by the MTA-R\'enyi Counting in Sparse Graphs ``Momentum'' Research Group, by NRDI 
(grant KKP 138270), and by the Hungarian Academy of Sciences (J\'anos Bolyai Scholarship).}
\begin{document}

\begin{abstract}
In the context of random regular graphs, the size of the maximum cut is probably the second most studied graph parameter after the independence ratio. Zdeborová and Boettcher used the cavity method, a non-rigorous statistical physics technique, to predict one-step replica symmetry breaking (1-RSB) formulas. Coja-Ohglan et al. confirmed these predictions as rigorous upper bounds using the interpolation method. While these upper bounds were not expected to be exact, they may be very close to the true values.

In this paper, we establish 2-RSB upper bounds and fine-tune their parameters to beat the aforementioned 1-RSB bounds.
\end{abstract}


\maketitle

\section{Introduction} \label{sec:intro}

This paper is concerned with the size of the \emph{maximum cut} of \emph{random regular graphs}. 

Recall that a \emph{cut} of a graph $G$ is a partition of the vertex set $V(G)$ into two sets: $V_1 \sqcup V_2=V(G)$. Let $e[V_1,V_2]$ denote the \emph{size of the cut}, that is, the number of edges crossing the cut (i.e., going between $V_1$ and $V_2$). We will write $\ga(G)$ for the size of the maximum cut normalized by the total number of edges in $G$:
\begin{equation} \label{eq:gamma_G}
\ga(G) \defeq \max_{V_1 \sqcup V_2 = V(G)} \frac{e[V_1,V_2]}{e(G)} .
\end{equation}

Furthermore, a graph is said to be \emph{regular} if each vertex has the same degree. For a fixed degree $d$, let $\Gb_{N,d}$ be a uniform random $d$-regular graph on $N$ vertices. This paper provides improved upper bounds on $\ga\big( \Gb_{N,d} \big)$ that hold asymptotically almost surely as $N \to \infty$.

\subsection{Ising model}
A well-established approach for estimating the size of the maximum cut is to study the free energy of the Ising model. On a finite graph $G$, the Ising model is defined as follows. The \emph{Hamiltonian} of a \emph{spin configuration} 
$\tau=\big( \tau_v \big)_{v \in V(G)} \in \{-1,+1\}^{V(G)}$ is 
\[ \Hc_G(\tau) \defeq \sum_{uv \in E(G)} 
\frac{1+\tau_u \tau_v}{2} ;\]
i.e., $\Hc_G(\tau)$ is simply the number of edges connecting vertices of the same spin. If $G$ is $d$-regular and has $N$ vertices, then we have   
\[ \Hc_G(\tau) = \frac{Nd}{2} - e[V_\tau^-,V_\tau^+] 
\text{, where }
V_\tau^- \defeq \{ v \, : \, \tau_v=-1 \} 
\text{ and } 
V_\tau^+ \defeq \{ v \, : \, \tau_v=+1 \} .\]
Here $e[V_\tau^-,V_\tau^+]$ is the size of the cut corresponding to $\tau$. Consequently, on regular graphs, minimizing the Hamiltonian $\Hc_G$ of the Ising model is equivalent to finding the maximum cut.

Given a real parameter $\be$, the partition function of the model is 
\[ Z_{G,\be} \defeq \sum_{\tau \in \{\pm 1\}^{V(G)}} 
\exp\big( -\be  \Hc_G(\tau) \big) .\]
In the antiferromagnetic regime $\be>0$, the corresponding Boltzmann distribution is biased towards opposite spins on neighboring vertices. In fact, we will be interested in the limit $\be \to \infty$, where the Boltzmann distribution concentrates on configurations corresponding to maximum cuts of $G$. 

A simple but key observation is that for $\be>0$ we have 
\[ Z_{G,\be} \geq 
\exp\bigg( - \be \min_\tau H_G(\tau) \bigg) 
= \exp\left( -\beta \big(1-\ga(G) \big) \frac{Nd}{2} \right) .\]
Equivalently,
\begin{equation} \label{eq:gamma_bound}
\ga(G) \leq 1+\frac{\log Z_{G,\be}}{\be Nd/2} .
\end{equation}

If we consider the random $d$-regular graph $\Gb=\Gb_{N,d}$ on $N$ vertices, then the following limit is known\cite{bayati2013combinatorial} to exist for every $d,\be$: 
%
\begin{equation} \label{eq:Phi_def}
\Phi_d(\be) \defeq 
\lim_{N \to \infty} \frac{\Eb \log Z_{\Gb,\be}}{N} .
\end{equation} 
Furthermore, let 
\[ \Phist_d \defeq \liminf_{\be \to \infty} \frac{\Phi_d(\be)}{\be} .\]
The following asymptotic bound follows easily from \eqref{eq:gamma_bound} and the well-known fact that $\ga(\Gb_{N,d})$ is tightly concentrated around its mean; see Section~\ref{sec:concentration} for details. 
\begin{lemma} \label{lem:gamma_general_bound} 
It holds with probability $1-o_N(1)$ that 
\[ \ga\big( \Gb_{N,d} \big) \leq 1+ \frac{2}{d} \Phist_d + o_N(1) .\]
\end{lemma}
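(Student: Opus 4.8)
The plan is to combine three ingredients: the deterministic bound \eqref{eq:gamma_bound}, valid for every fixed $\be>0$ and every realization of the graph; the existence of the limit $\Phi_d(\be)$ from \eqref{eq:Phi_def}; and the concentration of $\ga(\Gb_{N,d})$ around its mean. The crucial structural point is the order in which the two limits $N\to\infty$ and $\be\to\infty$ are taken: I would fix $\be$, send $N\to\infty$ first, and only afterwards optimize over $\be$.

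First I would rewrite \eqref{eq:gamma_bound} as
\[ \ga(\Gb_{N,d}) \leq 1 + \frac{2}{d\be}\cdot\frac{\log Z_{\Gb,\be}}{N}, \]
which holds for every $\be>0$ and every instance $\Gb=\Gb_{N,d}$. Taking expectations and invoking \eqref{eq:Phi_def} gives
\[ \limsup_{N\to\infty}\Eb\,\ga(\Gb_{N,d}) \leq 1 + \frac{2}{d\be}\lim_{N\to\infty}\frac{\Eb\log Z_{\Gb,\be}}{N} = 1 + \frac{2}{d}\cdot\frac{\Phi_d(\be)}{\be}. \]
Since the left-hand side is independent of $\be$, I can evaluate the right-hand side along a sequence $\be_k\to\infty$ realizing the liminf defining $\Phist_d$, which yields
\[ \limsup_{N\to\infty}\Eb\,\ga(\Gb_{N,d}) \leq 1 + \frac{2}{d}\Phist_d, \]
and hence $\Eb\,\ga(\Gb_{N,d}) \leq 1 + \tfrac{2}{d}\Phist_d + o_N(1)$.

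Finally, I would upgrade this bound on the mean to a high-probability statement using the concentration of $\ga(\Gb_{N,d})$ established in Section~\ref{sec:concentration}. Since modifying the neighborhood of a single vertex (a switching in the configuration model) changes the size of the maximum cut by $O(1)$, and hence changes the normalized quantity $\ga$ by $O(1/N)$, a standard bounded-differences (Azuma--McDiarmid) argument shows that $\ga(\Gb_{N,d})$ lies within $o_N(1)$ of $\Eb\,\ga(\Gb_{N,d})$ with probability $1-o_N(1)$. Combining this with the displayed bound on the mean gives the claim.

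The step I expect to require the most care is the ordering of limits: one cannot let $\be$ and $N$ tend to infinity simultaneously, because the rate of convergence in \eqref{eq:Phi_def} is not controlled uniformly in $\be$ (indeed $\be$ enters both the Hamiltonian and the partition function nontrivially). Working at the level of the deterministic quantities $\Phi_d(\be)$ --- taking $N\to\infty$ for fixed $\be$ and only then $\be\to\infty$ --- sidesteps this issue cleanly, which is exactly why it is advantageous to pass to expectations before optimizing over $\be$, rather than attempting to apply concentration of $\log Z_{\Gb,\be}$ for a $\be$ that grows with $N$.
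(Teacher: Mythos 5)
Your proposal is correct and matches the paper's own proof in all essentials: both combine the deterministic bound \eqref{eq:gamma_bound}, the fixed-$\be$ limit \eqref{eq:Phi_def}, a sequence of $\be$'s realizing the liminf defining $\Phist_d$, and the switching-based concentration of $\ga(\Gb_{N,d})$ from Section~\ref{sec:concentration} (the paper applies concentration first and then bounds the mean, while you do it in the opposite order, but this is an immaterial reordering). Your closing remark about the order of limits is exactly the point the paper's argument is structured around.
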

Consequently, any upper bound on $\Phist_d$ immediately yields an upper bound for the \maxcut problem over random $d$-regular graphs.

\subsection{Interpolation method}
The interpolation method is a rigorous technique for bounding $\Eb \log Z$ in various models. It is based on an ``interpolating'' family, which continuously transforms the original model into a ``decoupled'' one in such a way that $\Eb \log Z$ increases along the way. It has been used with great success to confirm a number of statistical physics predictions (non-rigorously) derived from the cavity method. In the context of maximum cuts, Coja-Oghlan et al. \cite{cojaoghlan2022ising} used the interpolation method to rigorously prove upper bounds predicted by Zdeborová and Boettcher \cite{zdeborova2010maxcut}. 

Originally, Franz and Leone established the interpolation method in sparse random graphs for a family of spin systems (including the Ising model) in \cite{franz2003replica}. This was later adapted to random regular graphs as well. A detailed exposition can be found in \cite[Appendix E]{sly2022sat}. The parameters of the general RSB bound include a \emph{functional order parameter} which can be thought of as a measure. In \cite{cojaoghlan2022ising} the authors used a simple and natural choice for this measure before taking the limit $\be \to \infty$. The resulting formula is far more manageable as it only has two free variables, see \eqref{eq:1rsb}. 

In this paper we consider 2-RSB formulas. At the ``deepest layer'' of the RSB hierarchy we employ a similar parameter choice as Coja-Oghlan et al. before taking the limit $\be \to \infty$. The resulting bound still has a measure parameter. Interestingly, even purely atomic measures with a small number of atoms yield good results. The key is that both the size and the location of each atom are treated as variables. These variables can be tuned to a great precision due to the relatively small number of variables. 

See Theorem~\ref{thm:2rsb} below for the discrete 2-RSB formula we obtained. Table \ref{table:bounds} shows the best bounds we found via numerical optimization for specific values of $d$. While the improvements may seem small, we actually expect the true values to be fairly close to these upper bounds. 

\begin{table}[h!] 
\caption{Upper bounds on the \maxcut problem for $3 \leq d \leq 7$: previous 1-RSB bounds and our new bounds on $\ga(\Gb_{N,d})$}
\centering
\begin{tabular}{l||l|l|l|l|l} 
$\deg$ ($d$) & 3 & 4 & 5 & 6 & 7 \\ 
\hline
1-RSB \cite{cojaoghlan2022ising} & 
0.92410 & 0.86823 & 0.83504 & 0.80486 & 0.78509 \\ 
$1^+$-RSB &
0.92403 & 0.86820 & 0.83494 & 0.80480 & 0.78499 \\
2-RSB &
0.92386 & 0.86790 & 0.83463 & 0.80435 & 0.78462 
\end{tabular}
\label{table:bounds}
\end{table}

Using a similar approach, in \cite{harangi2023rsb} we obtained RSB bounds on the independence ratio of $\Gb_{N,d}$ via the interpolation method applied to the hard-core model, improving on the previous best 1-RSB bounds of Lelarge and Oulamara \cite{lelarge2018replica}.

\subsection{Discrete 2-RSB formula}
Our 2-RSB upper bound includes a number of parameters: real numbers $z,m \in (0,1)$ and probabilities $p_i$ along with three-dimensional vectors $\qb_i$. Given a vector $\ab \in [0,1]^3$, we will use the following notation for its coordinates:
\[ \ab=\big( a^-,a^0,a^+ \big) .\]
We will always assume that the three coordinates add up to one: $a^- + a^0 + a^+ = 1$.

The formula involves two functions ($\Fed_z$ and $\Fst_z$) whose arguments are three-dimensional vectors. Specifically,
\begin{equation} \label{eq:Fed}
\Fed_z(\ab_1,\ab_2) \defeq 1 + 
(z-1)\big( a_1^- a_2^- + a_1^+ a_2^+ \big) .
\end{equation}
To define the other function, we need the following notation. Given a sequence $\si=(\si_1,\ldots,\si_d)$ of signs $\{ -,0,+ \}$, let $k^-(\si)$, $k^0(\si)$, and $k^+(\si)$ denote the number of $-$'s, $0$'s, and $+$'s in $\si$, respectively. Furthermore, let $k(\si) \defeq \min \big( k^-(\si),k^+(\si) \big)$. Then we define $\Fst_z$ as follows: 
\begin{equation} \label{eq:Fst}
\Fst_z(\ab_1,\ldots, \ab_d) \defeq 
\sum_{\si \in \{-,0,+\}^d } 
a_1^{\si_1} \cdots a_d^{\si_d} z^{k(\si)} .
\end{equation}
Now we can state our upper bound.
\begin{theorem} \label{thm:2rsb}
Let $z,m \in (0,1)$ and $n \in \Nb$. Suppose that\footnote{The condition $p_1+\cdots+p_n=1$ may be omitted because multiplying each $p_i$ with the same constant does not change the value of the upper bound formula.} 
\[ p_1, \ldots, p_n \in [0,1] \text{ with } 
p_1 + \cdots + p_n = 1 ,\]
and that for each $i=1,\ldots,n$ we have 
\[ \qb_i = \big( q_i^-,q_i^0,q_i^+ \big) \in [0,1]^3 
\text{ with } 
q_i^- + q_i^0 + q_i^+ = 1 .\] 
Then 
\begin{align*} 
\Phist_d \, m \log(1/z) &\leq 
\log \sum_{i_1=1}^n \cdots \sum_{i_d=1}^n \, 
\bigg( \prod_{\ell=1}^d p_{i_\ell} \bigg) 
\bigg( \Fst_z\big( \qb_{i_1},\ldots,\qb_{i_d}\big) \bigg)^m \\ 
&- \frac{d}{2} \log \sum_{i_1=1}^n \sum_{i_2=1}^n 
p_{i_1} p_{i_2} 
\bigg( \Fed_z\big( \qb_{i_1},\qb_{i_2}\big) \bigg)^m .
\end{align*}
\end{theorem}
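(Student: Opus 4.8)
The plan is to start from the general two-step replica-symmetry-breaking bound furnished by the interpolation method for $\Phi_d(\be)$ on random $d$-regular graphs, and then to pass to the zero-temperature limit $\be\to\infty$. Concretely, I would first invoke the 2-RSB interpolation inequality in the Franz--Leone form adapted to the regular case (cf.\ \cite{sly2022sat}): for any two-level hierarchical functional order parameter and any admissible Parisi parameters, $\Phi_d(\be)$ is bounded above by a \emph{star} contribution coming from a degree-$d$ vertex together with its $d$ incident cavity fields, minus $\tfrac{d}{2}$ times an \emph{edge} contribution correcting for the double-counting of edges. Both contributions are averages of $\tfrac1m\log(\cdots)$ over the random fields delivered by the hierarchy, with $m$ the outer Parisi parameter.

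Next I would specialize the functional order parameter as described in the discussion preceding the theorem. At the deepest layer I would adopt the elementary ansatz of Coja-Oghlan et al.\ \cite{cojaoghlan2022ising}, which for the Ising model fixes the spin-marginal in terms of a single field; at the outer layer I would take a purely atomic measure supported on $n$ atoms, the $i$-th atom carrying weight $p_i$ and encoding (after the limit) the three-state vector $\qb_i=(q_i^-,q_i^0,q_i^+)$, whose coordinates record the probabilities that the field freezes the incident spin to $-1$, leaves it free, or freezes it to $+1$. Under this choice the star and edge averages collapse into the finite sums over atom-tuples $(i_1,\ldots,i_d)$ and neighbour sign-patterns $\si$ appearing on the right-hand side.

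The heart of the argument is the limit $\be\to\infty$. Here the continuous Ising marginals concentrate onto the three frozen/free states, and the edge Boltzmann factor degenerates: a monochromatic frozen edge (both endpoints frozen to the same sign) retains a residual weight $z\in(0,1)$, while every other configuration retains weight $1$, so the edge average becomes $1+(z-1)(a_1^-a_2^-+a_1^+a_2^+)=\Fed_z$. For the star term one also sums over the central spin: aligning the vertex against the majority of its frozen neighbours produces $\min(k^-(\si),k^+(\si))$ unavoidable monochromatic edges, the free ($0$) neighbours imposing no penalty, and since $z<1$ this optimal alignment dominates, leaving the weight $z^{k(\si)}$ with $k(\si)=\min(k^-(\si),k^+(\si))$; this is precisely $\Fst_z$ of \eqref{eq:Fst}. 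Tracking the scaling, the residual per-clash weight $z$ is the zero-temperature survivor of the deepest-layer Boltzmann factor, with $\log(1/z)$ its effective inverse temperature; extracting the coefficient of $\be$ from the finite-$\be$ bound replaces $\Phi_d(\be)$ by $\Phist_d$, and the outer parameter $m$ together with this effective inverse temperature combine into the normalization $m\log(1/z)$ multiplying $\Phist_d$. Passing to the $\liminf$ and raising $\Fst_z$, $\Fed_z$ to the power $m$ then gives the stated inequality.

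The main obstacle is the rigorous justification of this limit. One must show that the spin-marginals concentrate on $\{-,0,+\}$ at the correct exponential rate in $\be$, so that exactly the factor $z^{k(\si)}$ (and not some perturbation of it) survives, verify that the outer atomic structure passes through the limit unchanged, and confirm that the subleading terms---both the subdominant central-spin choices and the contributions of the free neighbours---do not alter the extracted coefficient of $\be$. Because the outer ansatz is purely atomic and the relevant sums are finite, these interchanges should be tractable; the delicate bookkeeping lies in matching the exponents so that each same-sign frozen clash contributes one factor of $z$ while free neighbours contribute none, which is exactly what pins down the formulas \eqref{eq:Fst} and \eqref{eq:Fed}.
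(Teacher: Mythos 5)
Your proposal follows essentially the same route as the paper: specialize the general $r$-RSB interpolation bound of \cite{sly2022sat} (the paper's Theorem~\ref{thm:r-rsb}) with the three-state ansatz at the deepest layer and a purely atomic $n$-atom outer measure, then take $\be\to\infty$, $m_1\to 0$ with $\be m_1=\log(1/z)$ fixed, which is exactly how the paper derives Corollary~\ref{cor:r-rsb} and then Theorem~\ref{thm:2rsb}. The ``main obstacle'' you flag is in fact immediate in this setup: since the deepest-layer measure is \emph{chosen} to be atomic on $\{-1,0,+1\}$ (or on $0,\pm(1-e^{-\be})$), no concentration of marginals needs to be proved---every quantity is a finite sum of powers of $e^{-\be}$ with fixed coefficients, and the limit reduces to the elementary computation \eqref{eq:z_kappa}, with the finite atomic outer structure passing through trivially.
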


Let us write up $\Fst$ more explicitly. We may partition the sequences $\si$ according to the value $k(\si)$: for $0 \leq k \leq d/2$ let 
\[ S_{d,k} \defeq \bigg\{ 
\si \in \{-,0,+\}^d \, : \, 
k(\si)=k \bigg\} ,\]
and define the corresponding polynomials:  
\[ P_{d,k}(\ab_1,\ldots, \ab_d) \defeq 
\sum_{\si \in S_{d,k}} a_1^{\si_1} \cdots a_d^{\si_d} .\]
Then $\Fst_z$ can be written as
\[ \Fst_z(\ab_1,\ldots, \ab_d) 
= \sum_{k} P_{d,k}(\ab_1,\ldots, \ab_d) z^k .\]
Note that each $P_{d,k}$ is a homogeneous multivariate polynomial (in $3d$ variables) of degree $d$. Adding them up for all $k$ gives $1$:
\[ \sum_{k} P_{d,k}(\ab_1,\ldots, \ab_d) 
= \sum_{\si \in \{-,0,+\}^d } 
a_1^{\si_1} \cdots a_d^{\si_d}
= \prod_{i=1}^d \big( a_i^- + a_i^0 + a_i^+ \big) 
= 1^d = 1. \]
For a specific example, consider the smallest case $d=3$, when $k(\si)$ has two possible values ($0$ and $1$), and it is easy to see that $S_{3,1}$ consists of the following $12$ triples:
\begin{itemize}
\item $(-,-,+)$ and its permutations ($3$ triples);
\item $(-,+,+)$ and its permutations ($3$ triples);
\item $(-,\,0,\,+)$ and its permutations ($6$ triples).
\end{itemize}
Since $P_{3,0}(\ab_1,\ab_2,\ab_3) + P_{3,1}(\ab_1,\ab_2,\ab_3)=1$, it follows that 
\[ \Fst_z(\ab_1,\ab_2,\ab_3) 
= 1+(z-1) P_{3,1}(\ab_1,\ab_2,\ab_3)
= 1+(z-1) \sum_{\si \in S_{3,1}}
a_1^{\si_1} a_2^{\si_2} a_3^{\si_3} .\]

\subsection{Tuning the parameters}
Note that in Theorem~\ref{thm:2rsb} we get an upper bound for \underline{any} choice of the parameters $z,m,p_i,\qb_i$. One can use a computer to find good substitutions for these parameters. It turns out that the function to optimize is very ``rugged'': it has a large number of local minima with values close to the global minimum. Therefore, basic (local) optimization techniques are not sufficient. Our approach was to first use a Particle Swarm optimization algorithm (attempting to explore this rugged landscape), then, starting from the best point found by the Particle Swarm, to perform a local optimization in order to fine-tune the parameters.

We used the formula for $n=4$.\footnote{Increasing $n$ considerably slows down the process, offering only minor improvements in return.} In order to reduce the number of free variables to optimize, we assumed that the parameters have the following symmetric structure:
\begin{align*}
p_i   &: \quad (p_1,p_1,1/2-p_1,1/2-p_1) ;\\
q_i^- &: \quad (q_1^-,q_2^-,q_3^-,q_4^-) ;\\
q_i^+ &: \quad (q_2^-,q_1^-,q_4^-,q_3^-) ;\\
q_i^0 &: \quad (q_1^0,q_1^0,q_3^0,q_3^0)  
\text{, where } 
q_1^0=1-q_1^- - q_2^-; \; q_3^0=1-q_3^- - q_4^-  .
\end{align*}
Therefore, we had the following seven variables to tune:
\[ z,m,p_1,q_1^-,q_2^-,q_3^-,q_4^- .\]
We made our program codes (SageMath) available on GitHub:
\begin{center}
\url{https://github.com/harangi/rsb/blob/main/maxcut.sage}
\end{center}
Table \ref{table:parameter_choices} shows the best substitutions we found for each $d=3,4,5,6,7$.

\begin{table}[h!] 
\caption{Parameter choices for our 2-RSB upper bounds for $3 \leq d \leq 7$. }.
\centering
\begin{tabular}{l|l||l|l|l|l|l|l|l}
$d$ & bound & $z$ & $m$ & $p_1$ & $q_1^-$ & $q_2^-$ & $q_3^-$ & $q_4^-$ \\
\hline
\textbf{3} & 0.923867 & 
0.20304&0.27824&0.20662&0.48970&0.14372&0.93831&0.02348 \\
\textbf{4} & 0.867901 & 
0.29823&0.34695&0.24571&0.68439&0.29092&0.95728&0.03657 \\
\textbf{5} & 0.834637 & 
0.28496&0.29369&0.17626&0.54693&0.21655&0.94913&0.02047 \\
\textbf{6} & 0.804356 & 
0.33817&0.31152&0.19822&0.65668&0.31416&0.95681&0.03662 \\
\textbf{7} & 0.784629 &
0.34133&0.29489&0.17105&0.58283&0.24269&0.95452&0.01989
\end{tabular}
\label{table:parameter_choices}
\end{table}

\subsection{Comparison with 1-RSB}
We mention that we can use the functions $\Fed_z$ and $\Fst_z$ to express the original 1-RSB bound from \cite{cojaoghlan2022ising}. Consider the following vector  
\[ \ab \defeq \big( \al, 1-2 \al, \al \big) \]
parametrized by the variable $\al \in (0,1/2)$. Then the 1-RSB bound can be written as 
\begin{equation} \label{eq:1rsb}
\Phist_d \log(1/z) \leq 
\log \Fst_z(\underbrace{\ab,\ldots,\ab}_{d}) 
- \frac{d}{2} \log \Fed_z(\ab,\ab) .
\end{equation}
According to \eqref{eq:Fed} we have  
$\Fed_z(\ab,\ab) = 1+(z-1)(a^- a^- + a^+ a^+) = 1+(z-1)2\al^2$, while $\Fst_z(\ab,\ldots, \ab)$ is a multivariate polynomial in $z$ and $\al$ that can be worked out for any specific value of $d$ using \eqref{eq:Fst}.\footnote{There is an explicit formula in \cite{cojaoghlan2022ising} (see eq.~1.8) that expresses $\Fst_z(\ab,\ldots, \ab)$ using the $d$-th power of a cleverly chosen sparse matrix.} The 1-RSB formula follows after minimizing the right-hand side of \eqref{eq:1rsb} in $z$ and $\al$.

%
%

\subsection{1-RSB revisited}
In fact, it is possible to improve upon the best previous bounds from \cite{cojaoghlan2022ising} even within the framework of one-step replica symmetry breaking by making a more elaborate choice in the parameters before taking the limit $\be \to \infty$. We will refer to these bounds as $1^+$-RSB bounds; see Section~\ref{sec:tweaked1rsb} for details.

Although the bounds we get this way are inferior to the ones obtained by 2-RSB, there are a smaller number of free variables to optimize. Consequently, one may be able to perform the necessary numerical optimization even for those values of $d$ that are too large for the 2-RSB approach.

%

%

%

\section{Configuration model and concentration} \label{sec:concentration}

This short section contains some standard and well-known facts about random regular graphs and their maximum cut.

Recall that we defined $\Gb_{N,d}$ as a uniform random graph among all $d$-regular simple graphs on the vertex set $\{1,\ldots,N\}$. As usual, we will work with the closely related random graphs produced by the \emph{configuration model} instead. Given $N$ vertices, each with $d$ ``half-edges'', the configuration model picks a random pairing of these $Nd$ half-edges, producing $Nd/2$ edges. The resulting graph is $d$-regular but it may have loops and/or multiple edges. We denote this random multigraph by $\Gconf_{N,d}$. A well-known fact is that if $\Gconf_{N,d}$ is conditioned to be simple, then we get back $\Gb_{N,d}$. Moreover, for any $d$, the probability that $\Gconf_{N,d}$ is simple converges to a positive $p_d$ as $N \to \infty$. 

We say that a property holds \emph{asymptotically almost surely (a.a.s.)} as $N \to \infty$ if it holds with probability $1-o_N(1)$. If $\Gconf_{N,d}$ a.a.s.~has a certain property, then so does $\Gb_{N,d}$. Therefore, it suffices to prove our a.a.s.~results for $\Gconf_{N,d}$.

In order to prove Lemma~\ref{lem:gamma_general_bound} we need an Azuma-type concentration result for $\ga(\Gb_{N,d})$. One may derive one from the following general result from Wormald's survey \cite{wormald1999survey}.
\begin{lemma} \label{lem:concentration}
\cite[Theorem 2.19]{wormald1999survey}
Let $\varphi$ be a real-valued graph parameter defined for $N$-vertex, $d$-regular multigraphs. A \emph{simple swithching} is replacing a pair of edges $v_1v_2$, $v_3v_4$ of a multigraph with the pair $v_1v_3$,$v_2v_4$. Suppose that $|\varphi(G')-\varphi(G)|\leq c$ holds whenever $G'$ can be obtained from $G$ using a simple switching. Then for any $t>0$ it holds for $\Gb=\Gconf_{N,d}$ that 
\[ \Pb\bigg( |\varphi(\Gb) - \Eb \varphi(\Gb)| \geq t \bigg) 
\leq 2 \exp\left( \frac{-t^2}{Ndc^2} \right) .\]
\end{lemma}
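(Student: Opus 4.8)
The plan is to realize $\varphi\big(\Gconf_{N,d}\big)$ as the terminal value of a Doob martingale built from the sequential generation of the random pairing, and then to invoke the Azuma--Hoeffding inequality. The switching hypothesis will be exactly what controls the martingale increments.

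First I would fix a convenient way to sample the pairing one edge at a time: repeatedly take the unpaired half-edge of smallest index and match it to a uniformly random remaining unpaired half-edge. This produces the $Nd/2$ edges in a definite order. Let $\Fc_j$ be the $\sigma$-algebra generated by the first $j$ pairing decisions, and set $X_j \defeq \Eb\big[ \varphi(\Gb) \mid \Fc_j \big]$, so that $X_0 = \Eb\,\varphi(\Gb)$ and $X_{Nd/2} = \varphi(\Gb)$. By construction $(X_j)$ is a martingale with respect to $(\Fc_j)$.

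The heart of the argument is the increment bound $|X_j - X_{j-1}| \leq c$. To prove it I would condition on the first $j-1$ pairings, let $h$ be the half-edge matched at step $j$, and compare the two conditional laws obtained by matching $h$ to $a$ versus to $b$ for two arbitrary remaining half-edges $a,b$. The key step is a coupling: sample the remainder in the ``$h$--$a$'' world, where $b$ is matched to some half-edge $b^{*}$, and map it to the ``$h$--$b$'' world by replacing the edges $h$--$a$ and $b$--$b^{*}$ with $h$--$b$ and $a$--$b^{*}$ while keeping every other edge fixed. Since $h,a,b,b^{*}$ are distinct, this is a measure-preserving bijection between the two conditional sample spaces (perfect matchings on sets of equal size), and each coupled pair of multigraphs differs by exactly one simple switching in the sense of the lemma. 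Hence $\big| \Eb[\varphi \mid \text{partner}=a] - \Eb[\varphi \mid \text{partner}=b] \big| \leq c$. As $X_{j-1}$ is the average over partner choices of these conditional expectations, all lying in an interval of length at most $c$, it follows that $|X_j - X_{j-1}| \leq c$.

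Finally I would apply Azuma--Hoeffding to $(X_j)_{j=0}^{Nd/2}$: there are $Nd/2$ steps, each increment bounded by $c$, so
\[ \Pb\big( |\varphi(\Gb) - \Eb\,\varphi(\Gb)| \geq t \big) \leq 2 \exp\!\left( \frac{-t^2}{2 \sum_{j=1}^{Nd/2} c^2} \right) = 2 \exp\!\left( \frac{-t^2}{Nd\, c^2} \right), \]
which is precisely the claimed estimate. I expect the coupling in the increment step to be the only genuine obstacle: one must verify that matching $h$ to $a$ versus to $b$ can indeed be coupled so that the two completions differ by a single simple switching, and that the induced map preserves the uniform measure. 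The remaining ingredients are the standard Doob-martingale construction and Azuma--Hoeffding, which are routine.
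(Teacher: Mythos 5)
Your proof is correct and follows essentially the same route as the source the paper cites for this lemma: the paper gives no proof of its own but refers to Wormald's survey (Theorem 2.19), where the argument is exactly this Doob martingale over the sequential pairing process, with a switching-based coupling to bound the increments by $c$ and Azuma--Hoeffding applied to the $Nd/2$ steps. Your coupling step (swapping $\{h,a\},\{b,b^{*}\}$ for $\{h,b\},\{a,b^{*}\}$) is the standard and correct way to verify the bounded-differences condition, so there is nothing to add.
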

%
%
\begin{proof}[Proof of Lemma~\ref{lem:gamma_general_bound}]
Since a simple switching changes the size of the maximum cut by at most $2$, we can apply Lemma~\ref{lem:concentration} for $\varphi=\ga$ and $c=4/Nd$. Setting $t=(\log N)/\sqrt{N}$ we get that it holds with probability $1-o_N(1)$ that 
\[ \ga(\Gb) \leq \Eb \ga(\Gb) + \frac{\log N}{\sqrt{N}} 
\stackrel{\eqref{eq:gamma_bound}}{\leq} 
1+ \frac{\Eb \log Z_{\Gb,\be}}{\be Nd/2} + \frac{\log N}{\sqrt{N}}
\stackrel{\eqref{eq:Phi_def}}{=} 
1+ \frac{2}{d} \frac{\Phi_d(\be)}{\be} + o_N(1) \]
for any given $\be>0$. Then the same is true if we replace $\Phi_d(\be)/\be$ with $\Phi^\ast_d=\liminf \Phi_d(\be)/\be$.
\end{proof}
%

\section{Replica formulas} \label{sec:formulas}

Originally the cavity method and belief propagation were non-rigorous techniques in statistical physics to predict the free energy of various models. They inspired a large body of rigorous work, and over the years several predictions were confirmed. In particular, the so-called \emph{interpolation method} has been used with great success to establish rigorous upper bounds on the free energy. 

Building on the pioneering works \cite{franz2003replica,franz2003replica_non-poissonian,panchenko2004bounds}, there is a rigorous exposition of the interpolation method in the context of random regular graphs in \cite[Appendix E]{sly2022sat}. It is carried out for a general class of models. They mention two chief examples, one of them being the Potts model (a generalization of the Ising model with $q \geq 2$ spin states). A similar exposition had been given in \cite{lelarge2018replica}. Other sources explaining the interpolation method for specific models include \cite{ayre2022lower} and \cite[Section 5]{harangi2023rsb}.

\subsection{The general replica bound} \label{sec:r-rsb}
Here we present the general $r$-step RSB bound for the Ising model/\maxcut problem. For a topological space $\Omega$ let $\Pc(\Omega)$ denote the space of Borel probability measures on $\Omega$ equipped with the weak topology. We define $\Pc^k(\Omega)$ recursively as 
\[ \Pc^k(\Omega) \defeq 
\begin{cases}
\Pc( \Omega) & \text{if } k=1;\\
\Pc\big( \Pc^{k-1}( \Omega) \big) & \text{if } k \geq 2.
\end{cases}\]
The general bound will have the following parameters:
\begin{itemize}
\item $\be > 0$;
\item $0< m_1, \ldots, m_r <1$ corresponding to the so-called Parisi parameters;
\item a measure $\eta^{(r)} \in \Pc^r\big( [-1,1] \big)$.
\end{itemize}
\begin{definition} \label{def:recursive_sampling}
Given a fixed $\eta^{(r)} \in \Pc^r$, we choose (recursively for $k=r-1,r-2,\ldots,1$) a random $\eta^{(k)} \in \Pc^k$ with distribution $\eta^{(k+1)}$. Finally, given $\eta^{(1)}$ we choose a random $x \in [0,1]$ with distribution $\eta^{(1)}$. In fact, we will need $d$ independent copies of this random sequence, indexed by $\ell \in \{1,\ldots, d\}$. Schematically:
\[ \eta^{(r)} \, \to \, \eta_\ell^{(r-1)} \, \to \, \cdots \, \to \, 
\eta_\ell^{(1)} \, \to \, x_\ell \quad (\ell= 1,\ldots, d) .\] 

For $1 \leq k \leq r$ we define $\Fc_k$ as the $\sigma$-algebra generated by 
$\eta_\ell^{(r-1)}, \ldots, \eta_\ell^{(k)}$, $\ell=1,\ldots,d$, and by $\Eb_k$ we denote the conditional expectation w.r.t.\ $\Fc_k$. Note that $\Fc_r$ is the trivial $\sigma$-algebra and hence $\Eb_r$ is simply $\Eb$.

Given a random variable $V$ (depending on the variables $\eta_\ell^{(k)}, x_\ell$), let us perform the following procedure: raise it to power $m_1$, then apply $\Eb_1$, raise the result to power $m_2$, then apply $\Eb_2$, and so on. In formula, let $T_0 V \defeq V$ and recursively for $k=1, \ldots, r$ set 
\[ T_k V \defeq \Eb_k (T_{k-1} V)^{m_k} .\]
In this scenario, applying $\Eb_k$ means that, given $\eta_\ell^{(k)}$, $\ell=1,\ldots,d$, we take expectation in $\eta_\ell^{(k-1)}$, $\ell=1,\ldots,d$ (or in $x_\ell$ if $k=1$). 
\end{definition}

Now we are ready to state the $r$-RSB bound given by the interpolation method.
\begin{theorem} \label{thm:r-rsb}
Let $r \geq 1$ be a positive integer and $\be, m_1, \ldots, m_r, \eta^{(r)}$ parameters as described above. Let $x_\ell$ ($\ell=1,\ldots,d$) denote the random variables obtained from $\eta^{(r)}$ via the procedure in Definition \ref{def:recursive_sampling}. 

Furthermore, let 
\begin{align*}
\Fbed(x_1,x_2) &\defeq \frac{1-x_1x_2}{2} + \frac{1+x_1x_2}{2}\exp(-\be) ;\\
\Fbst(x_1,\ldots,x_d) &\defeq 
f_\be^-(x_1)\cdots f_\be^-(x_d) + 
f_\be^+(x_1)\cdots f_\be^+(x_d) 
\text{, where}\\
f_\be^-(x) &\defeq \frac{1+x}{2} + \frac{1-x}{2} \exp(-\be);\\
f_\be^+(x) &\defeq \frac{1-x}{2} + \frac{1+x}{2} \exp(-\be).
\end{align*}
Then we have the following upper bound for the free energy $\Phi_d(\be)$ of the Ising model on random $d$-regular graphs:
\begin{equation*}
\Phi_d(\be) \, m_1 \cdots m_r 
\leq \log T_r \Fbst(x_1,\ldots,x_d)
- \frac{d}{2} \log T_r \Fbed(x_1,x_2).
\end{equation*}
\end{theorem}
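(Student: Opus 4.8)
\textbf{Proof proposal for Theorem~\ref{thm:r-rsb}.}

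The plan is to establish the bound via the interpolation method, which constructs a one-parameter family of coupled systems connecting the Ising model on $\Gconf_{N,d}$ (at $t=0$) to a fully decoupled ``mean-field'' system (at $t=1$), and to show that the relevant free-energy functional is monotone along the interpolation. First I would set up the interpolating Hamiltonian: one takes the random-regular graph structure (equivalently, the $Nd/2$ random edges of the configuration model) and, as $t$ runs from $0$ to $1$, removes graph edges at rate proportional to $t$ while introducing independent ``cavity fields'' sampled from the functional order parameter $\eta^{(r)}$ at a compensating rate. The two functions $\Fbed$ and $\Fbst$ are exactly the local edge-weight and star-weight (degree-$d$ vertex) contributions of the Ising Boltzmann factor $\exp(-\be\,(1+\si_u\si_v)/2)$ rewritten in terms of a field variable $x \in [-1,1]$ encoding the local magnetization, so the first real step is to verify that $\Fbed$ and $\Fbst$ correctly reproduce these local factors after the single-spin sum is carried out.

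Next I would introduce the Parisi hierarchy. The nested expectations $T_k V = \Eb_k (T_{k-1}V)^{m_k}$ with parameters $m_1,\ldots,m_r$ implement the $r$-step replica-symmetry-breaking structure: each layer $\eta^{(k)}$ is drawn from the previous one as in Definition~\ref{def:recursive_sampling}, and the tower of exponentiations-and-averages is precisely the discrete Parisi functional. The core of the argument is the \emph{interpolation inequality}: differentiate the interpolating free energy $\phi(t)$ in $t$ and show $\phi'(t) \le 0$ (or $\ge 0$ in the direction that makes the decoupled endpoint an upper bound). This derivative splits into an edge-deletion term and a cavity-field-addition term; the sign is controlled by a convexity/Gaussian-type comparison, which at the RSB level reduces to a Latala--Guerra or Franz--Leone style inequality applied layer by layer through the nested expectations $\Eb_k$. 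The monotonicity of $\phi$ then yields $\phi(0) \le \phi(1)$, and evaluating the two endpoints gives $\Phi_d(\be)\, m_1\cdots m_r$ on the left and the decoupled expression $\log T_r \Fbst - \tfrac{d}{2}\log T_r \Fbed$ on the right.

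The main obstacle is the sign of the derivative $\phi'(t)$: one must show that replacing a genuine graph edge by its decoupled (independent-field) counterpart can only increase the free energy, and this has to hold \emph{after} passing through the entire tower of power-$m_k$-and-average operations. Establishing this requires a telescoping argument across the $r$ layers, invoking at each layer that $0<m_k<1$ (so that $x\mapsto x^{m_k}$ is concave) together with the independence structure built into the recursive sampling; the constraint $m_k\in(0,1)$ is exactly what makes each concavity step point in the right direction. I would organize this as an induction on $r$, reducing the $r$-step estimate to the $(r-1)$-step one conditioned on the top layer $\eta^{(r-1)}_\ell$, so that the base case $r=1$ is the classical Franz--Leone bound and the inductive step is a single application of the layer inequality. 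Since the paper cites the detailed general exposition in \cite[Appendix E]{sly2022sat} and in \cite{lelarge2018replica}, I would either invoke that machinery directly for the monotonicity step or reproduce only the Ising-specific verification that $\Fbed$ and $\Fbst$ are the correct local factors, treating the combinatorial bookkeeping of edges versus half-edge stars in the configuration model as routine.
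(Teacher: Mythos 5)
The paper does not actually prove Theorem~\ref{thm:r-rsb}: it states the result and attributes it to \cite{sly2022sat} (Appendix~E), which in turn builds on \cite{franz2003replica,franz2003replica_non-poissonian,panchenko2004bounds}. Your fallback position --- invoking that machinery for the monotonicity step --- is therefore exactly what the paper itself does, and your outline of the interpolation scheme, of $\Fbed$ and $\Fbst$ as the local edge/star factors, and of the nested operators $T_k$ as the discrete Parisi functional is a faithful sketch of the proof that lives in those references.

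However, two of the steps you propose to carry out yourself would fail as described. First, the interpolation you set up (delete graph edges at rate $t$, add cavity fields at a compensating rate) is the Poissonian Franz--Leone interpolation of \cite{franz2003replica}; it yields a bound for Erd\H{o}s--R\'enyi-type graphs, not for random regular graphs, because it destroys the degree constraint. For $\Gconf_{N,d}$ one needs the non-Poissonian version \cite{franz2003replica_non-poissonian,sly2022sat}, in which every vertex keeps exactly $d$ incident ``connections'' throughout, each being either a genuine half-edge or a cavity field; this degree preservation is precisely what makes the $d$-argument star term $\Fbst$ appear with weight $1$ per vertex and the edge correction with weight $d/2$ at the decoupled endpoint. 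Second, the proposed induction on $r$ (condition on the top layer $\eta_\ell^{(r-1)}$, apply the $(r-1)$-step bound, then average) cannot work. Conditioned on the top layer you are holding $d$ \emph{different} random measures $\eta_\ell^{(r-1)}$, one per branch, whereas the $(r-1)$-step theorem requires all $d$ branches to be sampled from a single deterministic measure; and even setting this aside, averaging the $(r-1)$-level inequality produces terms of the form $\Eb_r \log X^{m_r}$, and Jensen's inequality $\Eb_r \log X^{m_r} \le \log \Eb_r X^{m_r}$ points the right way for the star term but the \emph{wrong} way for the subtracted edge term $-\tfrac{d}{2}\log T_r \Fbed(x_1,x_2)$. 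The proofs in the cited literature avoid any such recursion: they run a single interpolation in which the entire hierarchy is realized by a Ruelle probability cascade (Poisson--Dirichlet process), and the sign of the derivative comes from invariance properties of the cascade combined with a positivity property of the Ising factor, namely $\exp\big(-\be(1+\si_1\si_2)/2\big)=a\,(1-b\,\si_1\si_2)$ with $0<b<1$ --- not from concavity of $x\mapsto x^{m_k}$ alone. If you intend your write-up to be a genuine proof rather than a pointer to \cite{sly2022sat}, these are the two places it would break.
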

This was rigorously proved in \cite{sly2022sat}. They actually considered a more general setting incorporating a class of (random) models over a general class of random hypergraphs (including the model considered in this paper).

We should make a number of remarks at this point.
\begin{itemize}
\item Above we slightly deviated from the standard notation as the usual form of the Parisi parameters would be
\[ 0 < \hat{m}_1 < \cdots < \hat{m}_r < 1 ,\]
where $\hat{m}_k$ can be expressed in terms of our parameters $m_k$ as follows:
\[ \hat{m}_r = m_1; \quad 
\hat{m}_{r-1} = m_1 m_2; \quad 
\ldots; \quad 
\hat{m}_{1} = m_1 m_2 \cdots m_r .\]
As a consequence, the indexing of $\Fc_k$, $\Eb_k$, $T_k$ is in reverse order, and the definition of $T_k$ simplifies a little because raising to power $1/\hat{m}_{r-k+2}$ and then immediately to $\hat{m}_{r-k+1}$ (as done, for example, in \cite{panchenko2004bounds}) amounts to a single exponent $\hat{m}_{r-k+1}/\hat{m}_{r-k+2}=m_k$ in our setting.
\item Also, generally there is an extra layer of randomness (starting from an $\eta^{(r+1)} \in \Pc^{r+1}$) resulting in another expectation outside the $\log$. This random choice is meant to capture the local structure of the graph in a given direction. However, random regular graphs have the same local structure around almost all vertices, and hence, in principle, we do not need this layer of randomness. Therefore, in the $d$-regular case one normally chooses a trivial $\eta^{(r+1)} = \delta_{\eta^{(r)}}$. That is why we omitted $\eta^{(r+1)}$ and started with a deterministic $\eta^{(r)}$.
\end{itemize}

\subsection{A specific choice} \label{sec:choice}
To make the formula in Theorem \ref{thm:r-rsb} more manageable, we make a specific choice at layer 1 (essentially the same as the one made in \cite{cojaoghlan2022ising} in the case $r=1$). We consider the limit $\be \to \infty$ and $m_1 \to 0$ in a way that $\be m_1$ stays constant and $x$ is concentrated on the three-element set $\{-1,0,+1\}$.\footnote{One could use atoms at $0$ and $\pm\big(1-\exp(-\be)\big)$, and the result would be the same. We mention this because this offers the possibility of a generalization: adding atoms at $\pm \big(1-\exp(-t\be) \big)$ for various values of $0<t<1$. This will be the key idea in Section~\ref{sec:tweaked1rsb}.} In other words, $\eta^{(1)}$ is a distribution 
$q^- \delta_{-1} + q^0 \delta_{0} + q^+ \delta_{+1}$ for some random $q^-,q^0, q^+ \in [0,1]$ with $q^- + q^0 + q^+=1$.

More specifically, fix some $0<z<1$ and consider the limit $\be \to \infty$, $m_1 \to 0$ with $\be m_1 = -\log z$. We will have to take the limit of various expressions of the form 
\[ \lim_{\be \to \infty} P\big( \exp(-\be) \big)^{m_1} 
\text{, where $P$ is a polynomial.}\]
It is easy to see that such a limit depends solely on the smallest exponent occurring in $P$. More precisely, suppose that 
\[ P\big( \exp(-\be) \big) = \sum_{k \in K} 
a_k \big( \exp(-\be) \big)^k ,\]
where $K$ is a finite set of exponents. (In fact, we do not need to assume that the exponents are integers.) Let $\ka \defeq \min K$ denote the smallest exponent and assume that the corresponding coefficient $a_\ka$ is positive. Then (keeping $\be m_1=-\log z$ fixed) we have 
\begin{equation} \label{eq:z_kappa}
\lim_{\be \to \infty} P\big( \exp(-\be) \big)^{m_1} 
= \lim_{\be \to \infty} \bigg( \underbrace{\exp(-\be m_1)}_{=z} \bigg)^\kappa 
\bigg( a_\ka \big(1+o_\be(1) \big) \bigg)^{m_1} 
= z^\kappa .
\end{equation}
It follows that, if each $x_\ell \in \{-1,0,+1\}$ was fixed, then 
\[ \Fbst(x_1,\ldots,x_d)^{m_1} \to z^{\min(k^-,k^+)} ,\]
where $k^-$ and $k^+$ denote the number of $-1$'s and $+1$'s among $x_1,\ldots,x_d$, respectively. 

Similarly,
\[ \Fbed(x_1,x_2)^{m_1} \to 
\begin{cases}
z & \text{if $x_1=x_2=-1$ or $x_1=x_2=+1$;} \\
1 & \text{otherwise.} 
\end{cases} .\]

Therefore, conditioned on 
\[ \eta_\ell^{(1)} 
= q_\ell^- \delta_{-1} 
+ q_\ell^0 \delta_{0} 
+ q_\ell^+ \delta_{+1} \]
for some deterministic 
$\qb_\ell=\big(q_\ell^-,q_\ell^0,q_\ell^+\big) \in [0,1]^3$ ($\ell=1,\ldots,d$), we get 
\begin{align*}
T_1 \Fbed(x_1,x_2) &\to \Fed_z(\qb_1,\qb_2) ;\\
T_1 \Fbst(x_1,\ldots,x_d) &\to \Fst_z(\qb_1,\ldots,\qb_d) ,
\end{align*}
where we defined $\Fed_z$ and $\Fst_z$ in \eqref{eq:Fed} and \eqref{eq:Fst}, respectively.

In the resulting formula the randomness in layer $1$ disappears along with the Parisi parameter $m_1$. After re-indexing ($k \to k-1$) we get the following corollary.
\begin{corollary} \label{cor:r-rsb}
Let $z \in (0,1)$ and $0< m_1, \ldots, m_{r-1} <1$. Furthermore, fix a deterministic $\pi^{(r-1)} \in \Pc^{r}\big( \{-1,0,+1\}\big)$ and take $d$ independent copies of recursive sampling:
\[ \pi^{(r-1)} \, \to \, \pi_\ell^{(r-2)} \, \to \, \cdots \, \to \, 
\pi_\ell^{(1)} \, \to \, \qb_\ell \quad (\ell= 1,\ldots, d) .\] 
We define the conditional expectations $\Eb_k$ and the corresponding $T_k$ as before, w.r.t.~this new system of random variables. Then 
\begin{equation*}
\Phist_d \, m_1 \cdots m_{r-1} \log(1/z) 
\leq \log T_{r-1} \Fst_z(\qb_1,\ldots,\qb_d) 
- \frac{d}{2} \log T_{r-1} \Fed_z(\qb_1,\qb_2) .
\end{equation*}
\end{corollary}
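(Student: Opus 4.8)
The plan is to obtain the corollary as the $\be \to \infty$ limit of Theorem~\ref{thm:r-rsb} under the innermost choice described in Section~\ref{sec:choice}. Concretely, I would apply Theorem~\ref{thm:r-rsb} with $\eta^{(r)} = \pi^{(r-1)}$, now regarded as a measure in $\Pc^r(\{-1,0,+1\}) \subset \Pc^r([-1,1])$. With this choice every layer-$1$ measure produced by the recursive sampling is automatically supported on $\{-1,0,+1\}$, i.e.\ of the form $q_\ell^-\delta_{-1}+q_\ell^0\delta_0+q_\ell^+\delta_{+1}$, so the truncated hierarchy $\pi^{(r-1)}\to\cdots\to\pi_\ell^{(1)}\to\qb_\ell$ is exactly the one in the statement, and the operators $T_1,\ldots,T_{r-1}$ of the corollary are the old $T_2,\ldots,T_r$ under the re-indexing $k\mapsto k-1$. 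I would then let $\be\to\infty$ and $m_1\to0$ along $\be m_1=-\log z$, keeping $m_2,\ldots,m_r$ fixed.

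For the left-hand side I would write $\Phi_d(\be)\,m_1 = \log(1/z)\cdot\Phi_d(\be)/\be$, so that Theorem~\ref{thm:r-rsb} becomes
\[ \log(1/z)\,\frac{\Phi_d(\be)}{\be}\, m_2\cdots m_r \;\leq\; \log T_r \Fbst(x_1,\ldots,x_d) - \frac{d}{2}\log T_r \Fbed(x_1,x_2). \]
Taking $\liminf_{\be\to\infty}$ and pulling the positive constants $\log(1/z)$ and $m_2\cdots m_r$ out of the liminf, the left side tends to $\Phist_d\log(1/z)\,m_2\cdots m_r$ by the very definition $\Phist_d=\liminf\Phi_d(\be)/\be$; after re-indexing this is the claimed left-hand side. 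This step is valid provided the right-hand side converges, which I verify next.

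The heart of the argument is the convergence of the right-hand side. Starting from the pointwise limit \eqref{eq:z_kappa}, for $x_\ell$ ranging over the finite set $\{-1,0,+1\}^d$ I would record that $\Fbst(x_1,\ldots,x_d)^{m_1}\to z^{\min(k^-,k^+)}$ and the analogous two-valued limit for $\Fbed(x_1,x_2)^{m_1}$. Because there are only finitely many sign patterns and the limiting values lie in a fixed compact subinterval of $(0,\infty)$, these limits are uniform; averaging over the discrete layer-$1$ choices with weights $\prod_\ell q_\ell^{\sigma_\ell}$ then gives
\[ T_1 \Fbst \to \Fst_z(\qb_1,\ldots,\qb_d), \qquad T_1 \Fbed \to \Fed_z(\qb_1,\qb_2) \]
uniformly over the vectors $\qb_\ell$ in the simplex. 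Since the limits are bounded away from $0$ (one checks $\Fst_z\ge z^{\lfloor d/2\rfloor}$ and $\Fed_z\ge z$) and above by $1$, the map $t\mapsto t^{m_k}$ is Lipschitz on the relevant range, and I would propagate the uniform convergence through the fixed operations $T_kV=\Eb_k(T_{k-1}V)^{m_k}$ ($k=2,\ldots,r$) by induction: each $\Eb_k$ preserves uniform convergence and each power map is uniformly continuous there. This yields $T_r\Fbst\to T_{r-1}\Fst_z$ and $T_r\Fbed\to T_{r-1}\Fed_z$, hence convergence of both $\log$ terms and of the whole right-hand side.

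The main obstacle is exactly this last interchange of the $\be\to\infty$ limit with the nested conditional expectations $\Eb_2,\ldots,\Eb_r$. The decisive point is that restricting the layer-$1$ support to the finite set $\{-1,0,+1\}$ upgrades the merely pointwise limit \eqref{eq:z_kappa} to uniform convergence of $T_1$; once that is in place, the uniform boundedness (away from $0$ and $\infty$) of all intermediate quantities makes the remaining passages routine, e.g.\ by dominated convergence.
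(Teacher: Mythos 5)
Your proposal is correct and follows essentially the same route as the paper: embed $\pi^{(r-1)}$ as an $\eta^{(r)}\in\Pc^r([-1,1])$ whose layer-$1$ measures are supported on $\{-1,0,+1\}$, apply Theorem~\ref{thm:r-rsb}, and pass to the limit $\be\to\infty$, $m_1\to 0$ with $\be m_1=-\log z$ using the computations of Section~\ref{sec:choice}. The only difference is that you spell out the limit-interchange details (uniformity of \eqref{eq:z_kappa} over the finitely many sign patterns, lower bounds $\Fst_z\geq z^{\lfloor d/2\rfloor}$, $\Fed_z\geq z$, and propagation through the nested operators $T_k$) that the paper's two-sentence proof leaves implicit.
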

\begin{proof}
For a formal proof one needs to define an $\eta^{(r)}\in \Pc^r([-1,1])$ for the fixed $\pi^{(r-1)}$ such that the corresponding $\eta_\ell^{(1)}$ is distributed as $q_\ell^- \delta_{-1} + q_\ell^0 \delta_{0} + q_\ell^+ \delta_{+1}$. Then Theorem \ref{thm:r-rsb} can be applied and we get the new formula when taking the limit $\be \to \infty$. 
\end{proof}

In the special case when every distribution is purely atomic, we get discrete formulas providing upper bounds for $\Phist_d$, and hence, via Lemma~\ref{lem:gamma_general_bound}, for the \maxcut problem as well.

In the case $r=1$ we have a deterministic $\qb$. It is easy to see that we may assume that $\qb$ is symmetric in the sense that $q^-=q^+ \defeq \al$, and we get back \eqref{eq:1rsb}.

As for the case $r=2$, if $\pi^{(1)}$ is a purely atomic measure with $n$ atoms such that the $i$-th atom has measure $p_i$ and corresponds to the vector $\qb_i$, then we get the discrete 2-RSB bound stated in Theorem~\ref{thm:2rsb} of the introduction.

\section{Tweaked 1-RSB bounds} \label{sec:tweaked1rsb}

It is possible to improve the previous best bounds even within the framework of the $r=1$ case of the interpolation method. In order to achieve this, we need to choose the parameter measure in a more intricate manner.

Depending on $\be$, we define the following $[-1,1]\to [-1,1]$ mapping:
\[ \xi_\be(t) \defeq 
\sign(t) \bigg( 1 - \exp(-|t|\be) \bigg) =  
\begin{cases}
-1+\exp(-|t|\be) & \text{if } t<0 ;\\
0 & \text{if } t=0 ;\\
1-\exp(-|t|\be) & \text{if } t>0 .
\end{cases} \]
For fixed $t_1,\ldots,t_d \in [-1,1]$, we set $x_i \defeq \xi_\be(t_i)$ ($i=1,\ldots,d$) and let $\be$ converge to $\infty$. Furthermore, for some fixed $z \in (0,1)$, we set $m_1 \defeq (-\log z)/\be$ as before. Using \eqref{eq:z_kappa} it is easy to see that we obtain the following in the limit: 
\[ \Fbst(x_1,\ldots,x_d)^{m_1} \to z^{\phist(t_1,\ldots,t_d)} 
\text{ as } \be \to \infty ,\]
where the function $\phist \colon [-1,1]^d \to [0,\infty)$ in the exponent is defined as 
\[ \phist(t_1,\ldots,t_d) \defeq \min(k^-,k^+) \text{, where } k^- = \sum_{i:\, t_i<0} |t_i| \text{ and } 
k^+ = \sum_{i:\, t_i>0} |t_i| \]
are the sum of the absolute values of the negative and positive $t_i$'s, respectively. 

Similarly, for some fixed $(t_1,t_2) \in [-1,1]^2$, 
\[ \Fbed(x_1,x_2)^{m_1} \to z^{\phied(t_1,t_2)} 
\text{ as } \be \to \infty ,\]
where the exponent $\phied \colon [-1,1]^2 \to [0,1]$ is defined as 
\[ \phied(t_1,t_2) \defeq 
\begin{cases}
\min(|t_1|,|t_2|) & \text{if } t_1t_2>0 ;\\
0 & \text{otherwise} .
\end{cases}
\]
We can take any measure $\tau$ on $[-1,1]$ and define $\eta$ as the push-forward of $\tau$ w.r.t.\ the mapping $\xi_\be$. In the limit we get the following upper bound:
\[
\Phist_d \log(1/z) \leq 
\log\bigg( \int z^{\phist(t_1,\ldots,t_d)} \, \mathrm{d} \tau^d(t_1,\ldots,t_d) \bigg) \\
- \frac{d}{2} \log\bigg( \int z^{\phied(t_1,t_2)} \, \mathrm{d} \tau^2(t_1,t_2) \bigg) .
\]
Our strategy is that for some $M \in \Nb$ we take the finite set 
\[ T=\left\{ \frac{i}{M} \, : \, 
i \in \Zb \text{ and } -M \leq i \leq M \right\} \subset [-1,1] \]
along with a symmetric probability distribution on $T$ described by the probabilities $q_i=q_{-i}$ ($i=0,\ldots,M$). Then the upper bound is a relatively simple function\footnote{The functions inside both $\log$'s are of the form $\sum_{i \in \Zb} P_i(q_0,\ldots,q_M) z^{i/M}$ with each $P_i$ being a homogeneous multivariate polynomial (of degree $d$).} of $z,q_0,q_1,\ldots, q_M$, where we have the following constraints for the variables:
\[ z \in (0,1); \quad q_i \in [0,1]; \quad 
q_0 + 2\sum_{i=1}^M q_i = 1 .\]
%
For given $d$ and $M$, one may work out the coefficients of $z^{i/M}$ (as polynomials of $q_0,\ldots,q_M$), and use numerical optimization to find good values to substitute into the obtained upper bound formula. We carried this out in the simplest case $M=2$ and included the obtained bounds in Table~\ref{table:bounds}.


\bibliographystyle{plain}
\bibliography{refs}

\end{document}